    \newtheorem{theorem}{Theorem}
    \newtheorem{proposition}[theorem]{Proposition}
    \newenvironment{definition}[1][Definition]{\begin{trivlist}
    \item[\hskip \labelsep {\bfseries #1}]}{\end{trivlist}}
\title{Partially Observed, Multi-objective Markov Games}
\author{Yanling Chang, Alan L. Erera, and Chelsea C. White III\\\\
School of Industrial and Systems Engineering\\
Georgia Institute of Technology}
\date{}
\begin{document}
\maketitle
\doublespacing
\begin{abstract}

The intent of this research is to generate a set of non-dominated policies from which one of two agents (the leader) can select a most preferred policy to control a dynamic system that is also affected by the control decisions of the other agent (the follower). The problem is described by an infinite horizon, partially observed Markov game (POMG). The actions of the agents are selected simultaneously at each decision epoch. At each decision epoch, each agent knows: its past and present states, its past actions, and noise corrupted observations of the other agent's past and present states. The actions of each agent are determined by the agent's policy, which selects actions at each decision epoch based on these data.

The leader considers multiple objectives in selecting its policy. The follower considers a single objective in selecting its policy with complete knowledge of and in response to the policy selected by the leader. This leader-follower assumption allows the POMG to be transformed into a specially structured, partially observed Markov decision process (POMDP). This POMDP is used to determine the follower's best response policy. A multi-objective genetic algorithm (MOGA) is used to create the next generation of leader policies based on the fitness measures of each leader policy in the current generation. Computing a fitness measure for a leader policy requires a value determination calculation, given the leader policy and the follower's best response policy. The policies from which the leader can select a most preferred policy are the non-dominated policies of the final generation of leader policies created by the MOGA. An example is presented that illustrates how these results can be used to support a manager of a liquid egg production process (the leader) in selecting a sequence of actions to best control this process over time, given that there is an attacker (the follower) who seeks to contaminate the liquid egg production process with a chemical or biological toxin.
\end{abstract}
\newpage
\section{Introduction}
\renewcommand{\labelitemi}{$\bullet$}
The intent of this research is to provide decision support to a \textit{leader}, an agent that wishes to determine a policy that will select actions to best control a sequential stochastic system over an infinite planning horizon, given that there is a \textit{follower}, an agent that also would like to exert control over the system for its own purposes. We assume:
\begin{itemize}
\item At each decision epoch, each agent knows: its past and present states, its past actions, and  noise corrupted observations of the other agent's past and present states.
\item The leader's and follower's actions are selected simultaneously at each decision epoch.  \item Each agent's policy selects actions based on data currently available to the agent.
\item The follower knows the leader's policy and determines a response policy that is optimal with respect to the follower's objective.
\item The leader considers multiple objectives in selecting its policy.  
\end{itemize}
Our objective is to find a set of non-dominated policies from which the leader can select its most preferred policy. Such information can serve as input to a decision support system that, for example, is based on a deterministic version of multi-attribute utility theory (Keeney \& Raiffa, 1993; Holloway \& White, 2008). In this context, the results presented in this paper generate options (i.e., policies) for consideration by (1) creating multiple generations of policies and eliminating all but the non-dominated set of policies from the last generation and (2) determining value scores for each of the policies in this non-dominated set.

We remark that the assumption that the follower knows the policy of the leader is a conservative assumption from the perspective of the leader and could unrealistically bias the game to the advantage of the follower.  However, this bias is mollified by the fact that the leader and follower do not share the same data at each decision epoch, and hence the follower can only infer what action the leader will actually take.  This assumption is also reasonable for many applications.  For example, if the leader is a large governmental agency or corporation and the follower is an individual or group intent on attacking the leader, then it may be reasonable to assume that the follower will know more about the leader than the leader will know about the follower.  Further, the assumption that the follower knows the policy of the leader allows us to transform the Markov game that we use to model leader-follower interaction into a model of sequential decision making under uncertainty and hence to take advantage of this computationally useful transformation.

The motivating application of this research is the operation of a liquid egg production facility in order to maximize the supply chain's productivity while minimizing its vulnerability to the intentional insertion of a biological or chemical toxin into the food production and distribution system (see Manning, 2005; O'Ryan, 1996; Sobel, 2002).  For this application, we assume the leader manages the production facility and is trying to balance two objectives: (1) maximize productivity and (2) minimize vulnerability. See Mohtadi \& Murshid (2009) for background information about this application area. Although initially developed to model this application, we remark that the decision support process to be presented can model a particularly broad class of sequential game applications, when all agents are intelligent and adaptive.

Many of the methodological characteristics of the decision support model presented in this paper have been considered elsewhere in the decision, risk, and reliability analysis literatures. Models of intelligent agents or adversaries are examined by Cardoso \& Diniz (2009). The single-period leader-follower game has been widely used to analyze the strategic interactions between two intelligent and adaptive agents. For example, Cavusoglu et al. (2013) have studied the impacts of passenger profiling on airport security operations. Bakir (2011) analyzed resource allocation for cargo container transportation security. Other applications of the single-period leader-follower game are presented in Bier (2007, 2008) and Zhuang \& Bier (2007). Multi-period games have been considered by Wang \& Bier(2011), who examined a two-period leader-follower repeated game, and by Hausken \& Zhuang (2011), who studied a multi-period game with myopic agents. Application of completely observable stochastic game to overseas cargo container security can be found in Bakir \& Kardes (2009) to capture the state dynamics over time.  Models that consider incomplete or uncertain information are presented and analyzed by McLay (2012), Rothschild (2012) and Wang \& Bier (2011). 

Each of above methodological characteristics is intended to enhance the realism of the respective model.  Our model extends the existing literature on sequential games by explicitly considering the multi-period interaction of two non-myopic agents, each of whom adjust its decisions according to the other agent's decisions over an infinite planning horizon. Furthermore, our model considers the case where neither agent has complete information about the other agent. By combining these characteristics into a single model, as has been done in this paper, we believe that the modeling realism of the resulting model has been further enhanced. However, and not surprisingly, additional model realism has resulted in increased computational challenges.  Dealing with these challenges is the focus of much of this paper.

Our approach to decision support is described as follows.  We begin with an initial (i.e., first generation) set of possible leader policies.  We then use a multi-objective genetic algorithm (MOGA) to create successive generations of leader policies.  Presumably, the next generation of leader policies contains, in some sense, higher quality policies than the current generation.  We then determine the non-dominated set of the last generation of leader policies determined and present this set to the leader.  The leader can then select the most preferred policy from this set for implementation.  

Mimicking the process of natural evolution, the MOGA creates the next generation of policies from the current set based on the fitness measures of each of the policies in the current set.  Each fitness measure is related to an objective of the leader.  We model the interaction between the leader and follower as a partially observed Markov game (POMG). Our POMG is a version of the partially observed stochastic game (POSG) where the state dynamics possess the Markovian property. We assume that the follower is aware of the policy that the leader has selected and makes use of this fact in constructing the follower's policy.  Thus at the policy level, the game is a leader-follower (Stackelberg) game.  This assumption allows the POMG to be converted into a partially observed Markov decision process (POMDP).  The fitness measures for each leader policy in the current generation are needed by the MOGA to create the next generation of leader policies from the current generation. These fitness measure are computed by a value determination procedure, given the leader policy and the follower response policy.

The paper is organized as follows.  We review the pertinent literature associated with the MOGA, POSG, and POMDP in Section 2.   In Section 3, we describe the MOGA in more detail, show how the fitness measures are computed using the POMG and the POMDP, present equilibrium conditions, and discuss the value of information. The numerical evaluation in Section 4 applies this decision support procedure to a simplified liquid eggs supply chain security problem and analyses the value of information for the agents. Section 5 summarizes research results and discusses future research directions.

\section{Literature Review} 
The research presented in this paper combines and extends results associated with POMDP, POSG, and MOGA.  We now review the pertinent literature in these three areas of research.

\subsection{The partially observed Markov decision process}
The POMDP is a model of sequential decision making under uncertainty that takes into consideration noise corrupted and/or costly observations of the state of the system under control.  Relative to the completely observed Markov decision process (i.e., the MDP; see Puterman, 1994), the POMDP represents a more general but significantly more computationally challenging model.  In seminal research, Smallwood and Sondik (1973) and Sondik (1978) showed that under robust conditions the optimal cost function for both the finite horizon and infinite horizon expected total discounted cost criterion POMDPs is piecewise linear and convex, and presented successive approximations approaches for solving these POMDPs that exploited this structure. Zhang (2010) revisited these structural results and convergence properties by exploiting the dual relationship between hyperplanes and points in the POMDP and related the solution of the POMDP to the Minkowski sum problem in computational geometry. Monahan (1982), Eagle (1984), and White and Scherer (1989) presented improved algorithms based on the structural results.  Detailed descriptions of other exact algorithms can be found in Cheng (1988), Littman (1994b), Cassandra (1994a), Cassandra, Littman and Zhang (1997),  Feng and Zilberstein (2004), Lin and White (1998, 2004) and Naser-Moghadasi (2012). Surveys of related solution techniques and complexity analyses for the POMDP can be found in Monahan (1982), Lovejoy (1991), White (1991), Cassandra (1994b) and Poupart (2005). 

In the development of approximate solution techniques for POMDPs, point-based value iteration (PBVI) is presented and analysed in Pineau (2003) and Shani (2012).  Platzman (1977, 1980), White and Scherer (1994), Littman (1994a),  Hauskrecht (1997), Hansen (1998a, 1998b),  Poupart (2005) examined finite memory policy and finite-state controllers. Varakantham (2007) and Poupart (2011) focused on calculating bounds on optimal POMDP solutions in order to evaluate the quality of approximate solutions.  Surveys of approximation methods for POMDPs can be found in Hauskrecht (2000), Aberdeen (2003), and Yu (2007). Approximate algorithms have proved useful for large-scale problems (Hoey, 2010; Thomson and Young, 2010).
\subsection{The partially observable stochastic game}
The stochastic game introduced by Shapley (1953) represents a multi-agent planning problem in a stochastic environment. In this setting, each player considers the consequences of its own action and the actions that its opponents or teammates may take. See Raghavan and Filar (1991), Filar (1997) and Ummels (2010) for details. 

The POSG is a new, relatively unexamined generalization of the stochastic game, where the states of the game are not precisely observed by the players and all players make decisions based on these partial observations.  Although POSGs provide a robust framework for multi-agent planning, Bernstein (2002) showed that POSGs are computationally intractable when problem size grows. Rabinovich (2003) has shown that even epsilon-optimal approximations are NP-hard.  As a result, POSGs with special structures that enhance computational tractability are of considerable interest.  Koller (1994) provided an efficient algorithm for a two-player POSG with tree-like structure.  McEneaney (2004) focused on a game where only one player has imperfect information.  Ghosh (2004), Oliehoek (2005), and Bopardikar (2011) studied a zero-sum version of the POSG. Emery-Montemerlo (2004) approximated POSGs with common payoffs by a series of Bayesian games.  A cooperative version of the POSG, called a decentralized partially observable Markov decision process (DEC-POMDP), has been studied by Becker (2004), Bernstein (2005), Seuken (2007), and Oliehoek (2008). A survey of the DEC-POMDP can be found in Oliehoek (2012).

It is often the case for real-world planning problems that the players' payoffs are neither completely aligned with others nor directly opposed.  Hespanha and Prandini (2001) proved the existence of Nash equilibrium in a two-player finite-horizon POSG. Hansen (2004) developed a dynamic program for general POSGs by pruning very weakly dominated strategies and then showed that this dynamic programming approach can achieve optimality for cooperative settings.  However, this approach is computationally infeasible for all but the smallest problems. Kumar and Zilberstein (2009) developed an approximate solution procedure for the POSG based on Hansen's work.  Interactive POMDPS addressed in Gmytrasiewicz and Doshi (2005) demonstrated another framework for multi-agent planning.

\subsection{The multi-objective genetic algorithm}
Genetic algorithms, introduced by Holland (1975), are adaptive heuristic search techniques that mimic the process of natural evolution.  A genetic algorithm represents each feasible problem solution in a population of solutions as a genome or chromosome, and begins with an initial population of feasible solutions.  Solutions having high measures of fitness are preferably selected during each generation to produce the next generation of solutions having improved fitness measures by applying genetic (e.g., mutation and crossover) operators. After a number of generations, the population presumably evolves to optimal or near-optimal solutions. Goldberg (1989), Forrest (1993) and Srinivas(1994) present surveys of genetic algorithms and the theories.

Multi-objective genetic algorithms (MOGA) are designed for the simultaneous optimization of multiple, often competing objectives. Usually the optimal solutions are a set of points, called the Pareto-optimal set, in the sense that no improvement can be made in any objective without sacrificing the other objectives. MOGA pushes the Pareto frontier towards the ideal optimal set of solutions as the algorithm proceeds. MOGA algorithms include: the vector evaluated GA (VEGA) (Schaffer, 1985), the Niched Pareto GA (NPGA) (Horn, 1994), the Pareto Envelope-based Selection Algorithms (PESA) (Corne, 2000) and the Fast Non-dominated sorting GA (NSGA-II) (Deb, 2002). Surveys are presented in Coello (2000) and Konak (2006).  MOGAs have been widely applied in optimization and decision making problems (see Ponnambalam, 2001; Deb, 2001; Ombuki, 2006; Lin, 2008; Bowman, 2010; Yildirim, 2012).
 
\section{Model and Analysis}
We first present the POMG model in Section 3.1. In order to determine an optimal response policy for the follower, the POMDP is constructed by combining the POMG model with any leader's policy. The resulting POMDP is presented and examined in Section 3.2. For computational reasons, we require that the leader and follower policies be finite memory policies. However, the POMDP constructs a perfect memory follower policy.  In Section 3.3, we present an approach for determining a finite-memory approximation of a perfect memory policy.  In order for the MOGA to determine the next generation of leader policies, fitness measures must be calculated for each policy in the current generation of leader policies.  Each fitness measure is associated with an objective of the leader.  In Section 3.4 we present an approach for determining the fitness measures, for any given leader policy and follower policy. Section 3.5 shows how to use MOGA to generate a non-dominate set of leader policies. Section 3.6 and 3.7 address equilibria and the value of information, respectively. 

\subsection{Partially Observed Markov Game}
The partially observed Markov game (POMG) serves as the modeling basis of our decision support system design.  The POMG is comprised of:

\textit{Decision epochs:} Let $\{0, 1, …\}$ be the set of all decision epochs when both agents select actions simultaneously.  Thus, the problem horizon is countable and infinite.

\textit{State spaces:} Let $S^L$ and $S^F$ be the state spaces of the leader and the follower, respectively.  Both spaces are epoch-invariant and finite.  At decision epoch t, let $s^L(t)$ be the leader's state, $s^F(t)$ be the follower's state, and denote $s(t) = \{s^L(t), s^F(t)\}$.

\textit{Action spaces:} Let $A^L$ and $A^F$ be the epoch-invariant action spaces of the leader and the follower, both of which are finite.  At decision epoch $t$, let $a^L(t)$ be the leader’s action, $a^F(t)$ be the follower's action, and denote $a(t) = \{a^L(t), a^F(t)\}$.

\textit{Observation spaces:}  Let $Z^L$ and $Z^F$ be the observation spaces of the leader and the follower, both of which are epoch-invariant and finite.  At decision epoch $t$, let $z^F(t)$ be the follower's observation of the leader's state, $z^L(t)$ the leader's observation of the follower's state, and denote $z(t) = \{z^L(t), z^F(t)\}$.

\textit{Systems dynamics:}  We assume the epoch-invariant probability $P(z(t+1), s(t+1) | s(t), a(t))$ is given.  Note that $$P(z(t+1), s(t+1) | s(t), a(t)) = P(z(t+1) | s(t+1), s(t), a(t)) P(s(t+1) | s(t), a(t)),$$ where $P(z(t+1) | s(t+1), s(t), a(t))$ is referred to as the state observation probability and $P(s(t+1) | s(t), a(t))$ is referred to as the state transition probability. 

\textit{Information patterns:}  The information pattern for agent $k $ describes what agent $k$ knows and when agent $k$ knows it.  Let
\begin{enumerate}[$\bullet$]
\item $Z^k(t) = \{z^k(t), z^k(t – 1), …\}$ and $Z^k(t, \tau) = \{z^k(t), z^k(t – 1), …, z^k(t – \tau + 1)\}$
\item $S^k(t) = \{s^k(t), s^k(t – 1), …\}$ and $S^k(t, \tau) = \{s^k(t), s^k(t – 1), …, s^k(t – \tau + 1)\}$
\item $\mathscr{A}^k(t) = \{a^k(t - 1), a^k(t – 2), …\}$ and $\mathscr{A}^k(t, \tau) = \{a^k(t - 1), …, a^k(t – \tau)\}$
\item $\mathscr{I}^k(t) = \{ \mathscr{Z}^k(t),\mathscr{S}^k(t),  \mathscr{A}^k(t)\}$
\item $\mathscr{I}^k(t, \tau) = \{  \mathscr{Z}^k(t, \tau),\mathscr{S}^k(t, \tau), \mathscr{A}^k(t, \tau)\}$ 
\end{enumerate}
We assume that agent $k$ chooses $a^k(t)$ on the basis of $\mathscr{I}^k(t)$, if agent $k$ has perfect memory, or on the basis of $\mathscr{I}^k(t, \tau)$, if agent $k$ has finite memory. Note that $\mathscr{I}^k(t)=\{\mathscr{I}^k(t, \tau), \mathscr{I}^k(t-\tau)\}.$

\textit{Single Period Cost and Criteria:}  Let $c^F(s(t), a(t))$ be the decision epoch invariant single period cost accrued by the follower at epoch $t$, given $s(t)$ and $a(t)$, and let $c^L_i(s(t), a(t))$ be the decision epoch invariant single period cost accrued by the leader with respect to criterion $i$ at epoch $t$, given $s(t)$ and $a(t)$.  The criteria under consideration are the concomitant expected total discounted costs over the infinite horizon.

\textit{Policies:}  A policy $\pi^k$ for agent $k$ is a mapping from what agent $k$ knows at epoch $t$, either $\mathscr{I}^k(t)$ or $\mathscr{I}^k(t, \tau)$, into its set of available actions, $A^k$.  Policies can be random and hence described by conditional probabilities.  We restrict our interest to stationary policies. Stationary policies tend to be easy to implement and in many cases, e.g., the determination of optimal follower response policies for a broad class of scalar criteria, sufficiently rich to contain an optimal policy.

\textit{Objectives:}  The follower's objective is to select a stationary policy that minimizes its criterion. The leader's objective is to optimize all criteria under consideration in some balanced manner, with this balance being determined by the leader.  Our objective is to provide the leader with a non-dominated set of policies from which to choose a single policy for implementation.

\subsection{Determination of a Best Response Policy $\bar{\pi}^F$, Given a leader policy $\pi^L$}
Let $\bar{\pi}^F$ be the \textit{perfect} memory best response policy to the leader policy $\pi^L$. We assume that the leader policy $\pi^L = \{P(a^L(t) | \mathscr{I}^L(t, \tau))\}$ and that the follower knows $\pi^L$.  We also assume that the follower knows $\mathscr{I}^F(t)$.  (We will restrict the follower's information pattern to $\mathscr{I}^F(t, \tau)$ below; however, for the moment it will be convenient to assume that the follower has perfect memory.)  We remark that although the follower knows the leader's policy, because the information patterns of the agents are in general different, the follower can only infer what action the leader will actually take.

Let $v^F(\pi^L; \mathscr{I}^F(t))$ be the follower's optimal criterion value, given $\mathscr{I}^F(t)$ and $\pi^L$.  For notational simplicity, assume the dependence of $\pi^L$ is implicit; hence, $v^F(\mathscr{I}^F(t)) = v^F(\pi^L; \mathscr{I}^F(t))$.  Then, according to results in (Puterman, 1994; Chapter 6), $v^F$ uniquely satisfies 
\begin{equation}
v^F = H^Fv^F
\end{equation}
where for any $v$,
$$[H^Fv](\mathscr{I}^F(t)) = \min_{a^F(t)} h^F(\mathscr{I}^F(t), a^F(t), v),$$ 
$$h^F(\mathscr{I}^F(t), a^F(t), v) = E\{c^F(s(t), a(t)) + \beta v(\mathscr{I}^F(t+1)) | \mathscr{I}^F(t), a^F(t)\},$$
and where $E$ is the expectation operator and the minimum is over all $a^F(t)$.  We now state our first result.

\begin{proposition}
\label{prop1}
Assume $\pi^L$ is given. Then for each $s^F(t)$ there is an at most countable set of arrays $\Gamma^*(s^F(t))$ such that:

$$v^F(\mathscr{I}^F(t)) = \min\{\sum \gamma(\mathscr{I}^L(t, \tau)) P(\mathscr{I}^L(t, \tau)| \mathscr{I}^F(t)): \gamma \in \Gamma^*(s^F(t))\},$$

where the sum is over all $\mathscr{I}^L(t, \tau)$.

\end{proposition}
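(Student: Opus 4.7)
The plan is to recast the follower's decision problem as a partially observed Markov decision process (POMDP) on an enlarged state space and then invoke the classical piecewise-linear-and-convex (PWLC) structure of Smallwood and Sondik (1973). Since the leader's action $a^L(t)$ is determined (stochastically) from $\mathscr{I}^L(t,\tau)$ through the known policy $\pi^L$, from the follower's perspective the relevant ``hidden state'' at epoch $t$ is the pair $(s^F(t),\mathscr{I}^L(t,\tau))$. Conditional on $\mathscr{I}^F(t)$, the follower's own component $s^F(t)$ is known exactly, so for each fixed $s^F(t)$ the sufficient statistic for optimal action selection collapses to the conditional distribution $b(\cdot) = P(\mathscr{I}^L(t,\tau) \mid \mathscr{I}^F(t))$ over the (at most countable) set of possible leader information patterns. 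My first step is to verify this sufficiency by checking that (i) $E\{c^F(s(t),a(t)) \mid \mathscr{I}^F(t), a^F(t)\}$ depends on $\mathscr{I}^F(t)$ only through $s^F(t)$ and $b$ (because $a^L(t)$ is drawn from $\pi^L(\cdot \mid \mathscr{I}^L(t,\tau))$ and $c^F$ depends only on $s(t)$ and $a(t)$), and (ii) the updated belief $b'$ at epoch $t+1$ is a deterministic function of $b$, $a^F(t)$, $s^F(t)$, and the newly observed $(z^F(t+1), s^F(t+1))$, via Bayes' rule applied to the joint transition $P(z(t+1), s(t+1) \mid s(t), a(t))$ composed with $\pi^L$.

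Next I would run a value iteration argument. Let $v_0 \equiv 0$ and $v_{n+1} = H^F v_n$. I claim by induction on $n$ that for each $s^F$ there is a finite set $\Gamma_n(s^F)$ of arrays indexed by $\mathscr{I}^L(t,\tau)$ such that
$$v_n(\mathscr{I}^F(t)) = \min\Bigl\{\sum_{\mathscr{I}^L(t,\tau)} \gamma(\mathscr{I}^L(t,\tau))\, P(\mathscr{I}^L(t,\tau) \mid \mathscr{I}^F(t)) : \gamma \in \Gamma_n(s^F(t))\Bigr\}.$$
The base case is trivial with $\Gamma_0(s^F) = \{0\}$. For the inductive step, fix $a^F(t)$; the single-period cost term is linear in $b$ with coefficients that depend on $s^F(t)$ and $a^F(t)$, and the continuation term expands as an expectation over $(z^F(t+1), s^F(t+1), a^L(t))$ of $v_n(\mathscr{I}^F(t+1))$. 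Substituting the inductive form of $v_n$, interchanging the outer expectation with the inner minimum produces (after expressing the posterior as Bayes-weighted sums of $b$) a finite collection of affine functions of $b$, one for each combination of $a^F(t)$ and the chosen $\gamma$ from $\Gamma_n(s^F(t+1))$ at each reachable $(z^F(t+1), s^F(t+1))$. Taking the minimum over $a^F(t)$ and over those selector choices yields $v_{n+1}$ as a minimum of finitely many affine functions of $b$, which is precisely the claimed form with an updated collection $\Gamma_{n+1}(s^F)$.

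Finally I would pass to the infinite horizon. Because $H^F$ is a $\beta$-contraction on bounded functions, $v_n \to v^F$ uniformly, so $v^F$ is the pointwise infimum of the monotone union $\bigcup_n \Gamma_n(s^F)$ of finite hyperplane families; the union is at most countable, and the infimum is attained whenever it is finite by a standard argument exploiting uniform convergence together with the boundedness of the coefficients (each $\gamma$ is bounded in sup-norm by $\|c^F\|_\infty/(1-\beta)$). Setting $\Gamma^*(s^F) = \bigcup_n \Gamma_n(s^F)$ (pruned of redundant arrays if desired) gives the representation in the proposition.

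The main obstacle is the inductive step: carefully writing the one-step Bayesian update of $b = P(\mathscr{I}^L(t,\tau) \mid \mathscr{I}^F(t))$ into $b' = P(\mathscr{I}^L(t+1,\tau) \mid \mathscr{I}^F(t+1))$. Unlike the standard POMDP belief update over states, here the update must account for the sliding window of length $\tau$ in the leader's information pattern and the coupling of $s^L$, the leader's noisy observation $z^L$ of $s^F$, and the realized leader action $a^L$ through $\pi^L$. Verifying that this update produces affine-in-$b$ conditional expectations, so that the linearity needed for PWLC propagation is preserved, is the essential technical content.
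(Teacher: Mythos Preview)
Your approach is essentially the paper's: show that $H^F$ preserves the piecewise-linear (in the belief $P(\mathscr{I}^L(t,\tau)\mid\mathscr{I}^F(t))$) structure, then invoke the $\beta$-contraction to pass to the limit. The paper's proof is terser---it writes down the explicit recursion for the new arrays $\gamma'$ and defers the mechanics to Smallwood and Sondik---whereas you spell out the POMDP reformulation and the belief update more carefully; but the skeleton is identical.

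One point to tighten in your limit step: with $v_0\equiv 0$ and nonnegative costs, the iterates satisfy $v_n\uparrow v^F$, so for each fixed $b$ one has $\inf_{\gamma\in\bigcup_n\Gamma_n}\langle\gamma,b\rangle=\inf_n\min_{\gamma\in\Gamma_n}\langle\gamma,b\rangle=\inf_n v_n(b)=0$, not $v^F(b)$. The union $\bigcup_n\Gamma_n$ therefore does \emph{not} represent $v^F$ as stated. The standard remedy (Sondik, 1978) is to initialize from above, e.g.\ $v_0\equiv \|c^F\|_\infty/(1-\beta)$, so that $v_n\downarrow v^F$ and $\inf_{\gamma\in\bigcup_n\Gamma_n}\langle\gamma,b\rangle=\inf_n v_n(b)=v^F(b)$; countability of $\Gamma^*=\bigcup_n\Gamma_n$ is then immediate. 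The paper itself is somewhat casual here (it only asserts that concavity survives the limit), so your version, once corrected, is in fact more complete on this point.
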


\begin{proof}
Assume $v$ and $\Gamma$ are such that
$$v(\mathscr{I}^F(t)) = \min\{\sum \gamma(\mathscr{I}^L(t, \tau)) P(\mathscr{I}^L(t, \tau)| \mathscr{I}^F(t)): \gamma \in  \Gamma(s^F(t))\},$$
where the sum is over all $I^L(t, \tau)$.  Then straightforward analysis, following arguments in (Smallwood and Sondik, 1973), shows that
$$h^F(\mathscr{I}^F(t), a^F(t), v) = \min\{\sum \gamma'(\mathscr{I}^L(t, \tau)) P(\mathscr{I}^L(t, \tau)| \mathscr{I}^F(t)): \gamma' \in \Gamma’(s^F(t), a^F(t))\},$$

where the sum is over all $\mathscr{I}^L(t, \tau)$, where if $\gamma ' \in \Gamma'(s^F(t), a^F(t))$ then $\gamma'$ is of the form
\begin{align*}
&\gamma’(\mathscr{I}^L(t, \tau)) = \sum_{a^L(t)}P(a^L(t)| \mathscr{I}^L(t, \tau))[c^F(s(t), a(t)) \\
&+ \beta \sum_{s(t+1)} \sum_{z(t+1)} \gamma^{i,j}(z^L(t+1), s^L(t+1), a^L(t), \mathscr{I}^L(t, \tau - 1)) P(z(t+1), s(t+1)| s(t), a(t))],
\end{align*}
where $\gamma^{i,j}$ can be any element in $\Gamma(s^F(t+1))$ for each $s^F(t+1)=i$ and $z^F(t+1)=j$. Then, $$[H^Fv](\mathscr{I}^F(t))=\min \{\sum_{\mathscr{I}^L(t,\tau)} \gamma''(\mathscr{I}^L(t,\tau))P(\mathscr{I}^L(t,\tau)|I^F(t)): \gamma'' \in \Gamma''(s^F(t))\},$$
where $\Gamma''(s^F(t))=\cup_{a^F(t)}\Gamma'(s^F(t),a^F(t)).$

The operator $H^F$ is a contraction operator on the Banach space comprised of all functions mapping $\mathscr{I}^F(t)$ into the real line, having as its norm the supremum norm, and as a result, the sequence $\{v^{n}\}$, where $v^{n+1} = H^Fv^{n}$, converges to $v^F$ for any given $v^{0}$.  The above result indicates that $H^F$ preserves piecewise linearity and concavity and in the limit preserves concavity. 
\end{proof}

We remark that $\Gamma''(s^F(t))$ usually contains many redundant vectors, where $\gamma$ is redundant if $[H^Fv](\mathscr{I}^F(t))$ is strickly less than $\sum_{\mathscr{I}^L(t,\tau)}\gamma(\mathscr{I}^L(t,\tau))P(\mathscr{I}^L(t,\tau)|\mathscr{I}^F(t))$ for all $\{P(\mathscr{I}^L(t,\tau)|\mathscr{I}^F(t))\}$. From both storage and computational perspectives, there is a value to keep the cardinality of $\Gamma''(s^F(t))$ as small as possible. Let the operator \textbf{PURGE} be such that $\Gamma'''(s^F(t))=\textbf{PURGE}(\Gamma''(s^F(t)))$ is the subset of $\Gamma''(s^F(t))$ having the smallest cardinality that satisfies \begin{align*} &\min\{\sum_{I^L(t,\tau)} \gamma''(\mathscr{I}^L(t,\tau)P(\mathscr{I}^L(t,\tau)|\mathscr{I}^F(t))): \gamma '' \in \Gamma''(s^F(t+1))\} \\&= \min\{\sum_{I^L(t,\tau)} \gamma'''(\mathscr{I}^L(t,\tau)P(\mathscr{I}^L(t,\tau)|\mathscr{I}^F(t))): \gamma '' \in \Gamma'''(s^F(t+1))\}\end{align*}
for all $\{P(\mathscr{I}^L(t,\tau)|\mathscr{I}^F(t))\}$. Related discussion about the necessity and the existence of the \textbf{PURGE} operator can be found in Lin and White (1998). 

With regard to the implications of Proposition \ref{prop1} and results in (Puterman, 1994; Chapter 6), $v^F$ and hence an optimal policy can depend on $\mathscr{I}^F(t)$ only through $(s^F(t), y^F(t))$, where $y^F(t) = \{P(\mathscr{I}^L(t, \tau)| \mathscr{I}^F(t)): \mbox{ for all } \mathscr{I}^L(t, \tau)\}$.  Hence, $(s^F(t), y^F(t))$ is a sufficient statistic.  Further, $v^F$ is concave in $y^F(t)$.  Additionally, if $\gamma^*(s^F(t))$ is a finite set of arrays for all $s^F(t)$, then $v^F$ is piecewise linear.  Note that the dimension of $(s^F(t), y^F(t))$ is finite and $t$-invariant.  Note further that the finite dimensionality of $y^F(t)$ follows directly from the finite-memory assumption imposed on $\pi^L$.  Thus, assuming $\gamma^*(s^F(t))$ is a finite set of arrays and $v^F$ is described in terms of $(s^F(t), y^F(t))$, $v^F$ has a finite representation.  We remark that the cardinality of $\Gamma'(s^F(t))$ can be substantially larger than the cardinality of $\Gamma(s^F(t))$, where both $\Gamma(s^F(t))$ and $\Gamma'(s^F(t))$ are defined in the proof of Proposition \ref{prop1}.  Techniques for reducing the cardinality of $\Gamma'(s^F(t))$ can be found in White (1991).  

\subsection{A Finite-Memory Approximation to $\bar{\pi}^F$}
As noted above, an optimal policy $\bar{\pi}^F$ for the follower that achieves the minimum in Equation 1 depends on $\mathscr{I}^F(t)$ and hence $\bar{\pi}^F$ is a perfect-memory policy. In order to insure that the leader criteria have finite representation, the follower policy must also be a finite-memory policy.  We determine a finite-memory (approximate) policy from a given perfect memory policy as follows.  We note that $\{(\mathscr{I}^F(t, \tau) , y^F(t - \tau)), t = 1, 2, …\}$ is also a sufficient statistic for this problem, with $y^F(t - \tau)$ representing the influence of data determined up through epoch $t – \tau$.  By (arbitrarily) assuming a uniform distribution over $y^F(t - \tau)$, we determine probabilities of the form $P(a^F(t) | \mathscr{I}^F(t, \tau))$.  Our numerical analyses indicate that these finite memory approximations of optimal perfect memory policies can be remarkably accurate, even for small $\tau$.  Identifying and analyzing other approaches for determining a finite-memory policy from a perfect memory policy is a topic for future consideration. In the following context, we use $\pi^F$ to denote a \textit{finite} memory best response policy and $\bar{\pi}^F$ to represent a \textit{perfect} memory best response policy of the follower.  

We remark that an alternative approach for directly determining a finite-memory follower policy in response to a given leader policy involves determining $v^F$ as a function of $\mathscr{I}^F(t, \tau)$, rather than as a function of $\mathscr{I}^F(t)$.  Whether or not such an approach could be useful is a topic of future research; see Platzman (1977) and White (1994) for related discussion.  

\subsection{Fitness Measure Determination}
Let $v^L_i(\pi^L, \pi^F; \mathscr{I}^L(t))$ be the criterion value for the leader's $i^{th}$ criterion, given $\mathscr{I}^L(t)$, a leader policy $\pi^L = \{P(a^L(t) | \mathscr{I}^L(t, \tau))\}$, and a follower policy $ \pi^F= \{P(a^F(t) | \mathscr{I}^F(t, \tau))\}$.  For notational simplicity, we assume that the dependence of $v^L_i(\pi^L, \pi^F; \mathscr{I}^L(t))$ on $(\pi^L, \pi^F)$ is implicit; hence, $v^L_i(\mathscr{I}^L(t)) = v^L_i(\pi^L, \pi^F; \mathscr{I}^L(t))$.  Then according to results in (Puterman, 1994; Chapter 6), $v^L_i$ uniquely satisfies
$$v^L_i(\mathscr{I}^L(t)) = h^L_i(\mathscr{I}^L(t), v^L_i)$$
for all $\mathscr{I}^L(t)$, where
$$h^L_i(\mathscr{I}^L(t),v)=E\{c^L_i(s(t),a(t))+\beta v(\mathscr{I}^L(t+1))|\mathscr{I}^L(t) \}.$$

We now show that $v^L_i(\mathscr{I}^L(t))$ is dependent on $\mathscr{I}^L(t)$ only though $(\mathscr{I}^L(t, \tau), y^L(t))$, where the array $y^L(t) = \{P(I^F(t, \tau)| \mathscr{I}^L(t)): \mbox{ for all } \mathscr{I}^F(t, \tau)\}$.  Thus, $(\mathscr{I}^L(t, \tau), y^L(t))$ is a sufficient statistic.  

\begin{proposition}
\label{prop2}
Assume $(\pi^L, \pi^F)$ are given as two finite-memory policies. Then, there is a function $g^*_i$ such that $$v^L_i(\mathscr{I}^L(t))=\sum g_i^*(\mathscr{I}^L(t, \tau), \mathscr{I}^F(t, \tau))P(\mathscr{I}^F(t, \tau)|\mathscr{I}^L(t)),$$
where the sum is over all $\mathscr{I}^F(t,\tau)$. Further, $g^*_i$ is the unique solution of the equation
\begin{align*}
&g^*_i(\mathscr{I}^L(t, \tau), \mathscr{I}^F(t, \tau)) =  \sum\nolimits^1 P(s(t),a(t)|\mathscr{I}^F(t, \tau),\mathscr{I}^L(t, \tau))\{c^L_i(s(t),a(t))\\
&+\beta \sum\nolimits^2 g^*_i[(\mathfrak{z}^L(t+1), \mathscr{I}^L(t, \tau-1)),(\mathfrak{z}^F(t+1), \mathscr{I}^F(t, \tau-1))]P(z(t+1),s(t+1)|s(t),a(t)) \},
\end{align*}
where $\mathfrak{z}^k(t)=\{z^k(t),s^k(t),a^k(t-1) \}$, $\sum^1$ is over all $s(t)$ and $a(t)$, and $\sum^2$ is over all $z(t+1)$ and $s(t+1)$.
\end{proposition}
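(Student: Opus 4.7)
The plan is to prove Proposition 2 in three stages: (i) introduce $g^*_i$ as the expected discounted leader cost conditioned on the joint finite-memory window $(\mathscr{I}^L(t,\tau),\mathscr{I}^F(t,\tau))$, (ii) marginalize over the follower's window to recover $v^L_i(\mathscr{I}^L(t))$ in the stated form, and (iii) derive the recursion and establish uniqueness by a contraction argument. The crucial structural fact I will use throughout is that, because $\pi^L$ and $\pi^F$ are finite-memory, the pair $(\mathscr{I}^L(t,\tau),\mathscr{I}^F(t,\tau))$ is a sufficient statistic for the joint future trajectory generated by $(\pi^L,\pi^F)$ under the Markovian state dynamics specified in Section 3.1. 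Consequently,
\[
g^*_i(\mathscr{I}^L(t,\tau),\mathscr{I}^F(t,\tau)) \;:=\; E\Bigl\{\sum_{k=0}^{\infty}\beta^{k} c^L_i(s(t+k),a(t+k)) \,\Big|\, \mathscr{I}^L(t,\tau),\mathscr{I}^F(t,\tau)\Bigr\}
\]
is well-defined, bounded (given bounded single-period costs and $\beta<1$), and depends on histories only through the two finite windows.

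Next I would establish the representation. Since $\mathscr{I}^L(t,\tau)$ is a deterministic function of $\mathscr{I}^L(t)$, the tower property of conditional expectation gives
\[
v^L_i(\mathscr{I}^L(t)) \;=\; E\bigl\{g^*_i(\mathscr{I}^L(t,\tau),\mathscr{I}^F(t,\tau))\,\big|\,\mathscr{I}^L(t)\bigr\} \;=\; \sum_{\mathscr{I}^F(t,\tau)} g^*_i(\mathscr{I}^L(t,\tau),\mathscr{I}^F(t,\tau))\,P(\mathscr{I}^F(t,\tau)\mid\mathscr{I}^L(t)),
\]
which is the stated sum. To derive the recursion for $g^*_i$, I would condition one step forward. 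Given $(\mathscr{I}^L(t,\tau),\mathscr{I}^F(t,\tau))$, the joint action $a(t)$ is produced by independent draws from $\pi^L$ and $\pi^F$ applied to their respective windows, so the joint distribution of $(s(t),a(t))$ is $P(s(t),a(t)\mid\mathscr{I}^L(t,\tau),\mathscr{I}^F(t,\tau))$; the subsequent $(z(t+1),s(t+1))$ is drawn from $P(z(t+1),s(t+1)\mid s(t),a(t))$; and after one transition the finite-memory windows shift to $(\mathfrak{z}^L(t+1),\mathscr{I}^L(t,\tau-1))$ and $(\mathfrak{z}^F(t+1),\mathscr{I}^F(t,\tau-1))$. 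Summing the discounted continuation over the post-transition variables reproduces exactly the displayed equation.

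Finally, for uniqueness I would define the operator $T$ mapping the Banach space of bounded real-valued functions on the finite set of joint finite-memory windows (equipped with the supremum norm) to itself by the right-hand side of the recursion. A standard computation shows $\|Tg - Tg'\|_\infty \le \beta\,\|g-g'\|_\infty$, so $T$ is a $\beta$-contraction and admits a unique fixed point by the Banach fixed-point theorem, yielding $g^*_i$ as that unique solution. I expect the main technical obstacle to be the bookkeeping needed to justify the sufficient-statistic claim rigorously: one must verify, via the finite-memory assumption on $(\pi^L,\pi^F)$ combined with the Markov dynamics, that conditioning on the pair of windows severs all dependence on earlier histories both for action selection and for state/observation transitions, so that $g^*_i$ really is a function of the two windows alone and the one-step factorization used in the recursion is valid.
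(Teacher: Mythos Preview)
Your proposal is correct but follows a genuinely different route from the paper. The paper's argument is in the Smallwood--Sondik ``structure preservation'' style: it assumes a function $v$ of the linear form $v(\mathscr{I}^L(t))=\sum_{\mathscr{I}^F(t,\tau)} g(\mathscr{I}^L(t,\tau),\mathscr{I}^F(t,\tau))P(\mathscr{I}^F(t,\tau)\mid\mathscr{I}^L(t))$, shows that the evaluation operator $H^L$ (where $[H^Lv](\mathscr{I}^L(t))=h^L_i(\mathscr{I}^L(t),v)$) maps such a $v$ to another function of the same form with $g$ replaced by the displayed update $g'$, and then appeals to the fact that $H^L$ is a $\beta$-contraction on bounded functions of $\mathscr{I}^L(t)$ so that value iteration converges to $v^L_i$. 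Your argument instead \emph{constructs} $g^*_i$ directly as a conditional expectation on the joint window, obtains the representation via the tower property, and runs the contraction on the finite-dimensional space of functions of $(\mathscr{I}^L(t,\tau),\mathscr{I}^F(t,\tau))$. Your route is more elementary and gives uniqueness of $g^*_i$ more transparently (the paper's contraction lives on the larger space and yields uniqueness of $v^L_i$, from which uniqueness of $g^*_i$ must be inferred); the paper's route has the advantage of paralleling the proof of Proposition~1 and of not requiring the sufficient-statistic argument you flag as the main technical obstacle. One small point worth tightening in your write-up: the tower property you need is $E[\,\cdot\mid\mathscr{I}^L(t)]=E\bigl[E[\,\cdot\mid\mathscr{I}^L(t),\mathscr{I}^F(t,\tau)]\mid\mathscr{I}^L(t)\bigr]$ together with the observation that $E[\,\cdot\mid\mathscr{I}^L(t),\mathscr{I}^F(t,\tau)]=E[\,\cdot\mid\mathscr{I}^L(t,\tau),\mathscr{I}^F(t,\tau)]$ because the two windows jointly determine $s(t)$ and both policies; stating it this way avoids any appearance that $\mathscr{I}^L(t)$ is coarser than $(\mathscr{I}^L(t,\tau),\mathscr{I}^F(t,\tau))$.
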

\begin{proof}
We remark that since $(\pi^L, \pi^F)$ is assumed given, $P(s(t),a(t)|\mathscr{I}^F(t,\tau), \mathscr{I}^L(t,\tau))$ is well defined. Assume there is a function $g$ such that $$v(\mathscr{I}^L(t)) = \sum g(\mathscr{I}^L(t,\tau),\mathscr{I}^F(t,\tau))P(\mathscr{I}^F(t,\tau)|\mathscr{I}^L(t)),$$
where the sum is over all $\mathscr{I}^F(t,\tau)$. Then it is straightforward to show that there is a function $g'$ such that
$$h^L_i(\mathscr{I}^L(t),v) = \sum g'(\mathscr{I}^L(t,\tau), \mathscr{I}^F(t,\tau))P(\mathscr{I}^F(t,\tau)|\mathscr{I}^L(t)),$$
where the sum is over all $\mathscr{I}^F(t,\tau)$, and  
\begin{align*}
&g'(\mathscr{I}^L(t, \tau), \mathscr{I}^F(t, \tau)) =  \sum\nolimits^1 P(s(t),a(t)|\mathscr{I}^F(t, \tau),\mathscr{I}^L(t, \tau))\{c^L_i(s(t),a(t))\\
&+\beta \sum\nolimits^2 g[(\mathfrak{z}^L(t+1), \mathscr{I}^L(t, \tau-1)),(\mathfrak{z}^F(t+1), \mathscr{I}^F(t, \tau-1))]P(z(t+1),s(t+1)|s(t),a(t)) \},
\end{align*}
and where $\mathfrak{z}^k(t)=\{z^k(t),s^k(t),a^k(t-1) \}$, $\sum^1$ is over all $s(t)$ and $a(t)$, and $\sum^2$ is over all $z(t+1)$ and $s(t+1)$.The result follows directly from the following facts:
\begin{enumerate}[$\bullet$]
\item The operator $H^L$, where $[H^Lv](\mathscr{I}^L(t)) = h^L_i(\mathscr{I}^L(t), v)$, is a contraction operator on the Banach space comprised of all functions mapping $\mathscr{I}^L(t)$ into the real line, having as its norm the supremum norm.
\item As a result, the sequence $\{v^{n}\}$, where $v^{n+1} = H^Lv^{n}$, converges to $v^L$ for any given $v^{0}$.
\end{enumerate}
\end{proof}
Since both $\pi^L$ and $\pi^F$ are finite-memory policies, then both $\mathscr{I}^L(t, \tau)$ and $y^L(t)$ are $t$-invariant arrays of finite dimension, which enhances the potential computability of $v^L$.We remark that Proposition \ref{prop2} holds for any given finite memory leader policy $\rho^L$ and follower policy $\rho^F$, and hence $\rho^F$ is not necessarily a response policy to $\rho^L$.

We now summarize how fitness measures are determined for a given finite-memory leader policy:
\begin{enumerate}[$\bullet$]
\item	Step 1: Determine a \textit{perfect} memory follower response policy that achieves the minimum expected cost for the follower, using Proposition \ref{prop1}.
\item	Step 2: Approximate the resulting perfect-memory follower response policy by a \textit{finite}-memory policy.
\item	Step 3: Given the leader policy and the follower's approximate response policy, determine the concomitant fitness measures using Proposition \ref{prop2}.
\end{enumerate}

\subsection{Multi-Objective Genetic Algorithm}
We now describe how we use a multi-objective genetic algorithm (MOGA), NSGA-II (Deb, 2002), to generate policies from which the leader will choose a most preferred policy.

Let $\{\pi^L(m), m = 1, …, M\}$ be the current population of the leader's finite memory policies, and for each $m$, let $\pi^F(m)$ be the finite memory follower's response policy to the leader's policy $\pi^L(m)$.  Further, let $v^L_i(\pi^L(m), \pi^F(m))$ be the expected cost of the leader's $i^{th}$ criterion, given the policy pair $(\pi^L(m), \pi^F(m))$. 

\begin{definition}Policy $\pi^L$ is said to \textit{dominate} policy $\rho^L$ if $$v^L_i(\pi^L, \pi^F) \leq v^L_i(\rho^L, \rho^F), \forall i$$ and there exists at least one $i$ such that $$v^L_i(\pi^L, \pi^F) < v^L_i(\rho^L, \rho^F),$$ where $\pi^F$ and $\rho^F$ are the follower's response policies to the leader's policy $\pi^L$ and $\rho^L$, respectively. 

Policy $\pi^L$ is said to be \textit{non-dominated} if there does not exist a policy that dominated policy $\pi^L$.
\end{definition}

The MOGA constructs the next generation of policies from the current set of policies as follows. The MOGA encodes each policy $\pi^L(m)$ into a chromosome. A gene is an element of the chromosome vector, and an allele is a numerical value taken by a gene. In the context of our model, the chromosome is a probability mass vector over the action space, and the $i^{th}$ gene of the chromosome denotes the probability that action $i$ is selected by the leader. The MOGA then determines $v^L_i, 1 \leq i \leq N$ for each chromosome, where $N$ is the number of leader's objectives. A description of how $v^L_i$ is determined can be found in section 3.4.  The tuple $(v^L_1,...,v^L_N)$ serves as the fitness measure of this chromosome. 

On the basis of $(v^L_1,...,v^L_N)$, the population of chromosomes are partitioned into subsets called fronts, where front $1$ is the set of non-dominated chromosomes, and front $k+1$ is the set of non-dominated chromosomes when the chromosomes in fronts $1$ through $k$ are removed from consideration, $k = 1, 2, …$.  Chromosomes in front $k$ are given rank $k$.  In addition, the crowding distance of each chromosome is determined within each front. Crowding distance is defined as the average Euclidean distance of a chromosome to the other chromosomes in the front, based on $(v^L_1,...,v^L_N)$ as a measure of position. Crowding distance is considered a measure of diversity for the policies, and for this measure, larger is considered better. The current generation of policies is sorted according to ranking and crowding distance. 

Parents are selected from the current generation of policies, based on their ranks and crowding distances. Chromosomes with higher rank and larger crowding distance are selected to generate offspring with higher probability.  The selected parents form a mating pool and generate offspring using a crossover operator. For each parents pair, the crossover operator randomly exchanges a portion of genes with each other to form two new offspring.  A mutation operator is also used to maintain genetic diversity from one generation to another. This operator randomly alters a certain percentage of genes in the current generation of policies. Then the non-dominated sorting procedure is applied again on the current population and offspring population, the top M (population size) chromosomes are kept and this is the next population. The whole algorithm repeats for a certain number of iterations. 
\subsection{Equilibria}
We remark that there are two equilibrium conditions, one associated with each agent.  With respect to the follower, assume $\bar{\pi}^F$ is the \textit{perfect} memory response policy to a given leader policy $\pi^L$.  Then, results in Proposition \ref{prop1} imply that $$v^F(\mathscr{I}^F(t)) = v^F(\pi^L, \bar{\pi}^F; \mathscr{I}^F(t)) \leq v^F(\pi^L, \rho^F; \mathscr{I}^F(t))$$ for all follower policies $\rho^F$ and all $\mathscr{I}^F(t)$, where a direct application of the results of Proposition \ref{prop2} can be used to determine $v^F(\rho^L, \rho^F; \mathscr{I}^F(t))$ for any pair of leader-follower policies $(\rho^L, \rho^F)$.

With respect to the leader, we now assume $\pi^F$ is a \textit{finite-memory approximation} of the perfect memory follower's response policy to the given leader policy $\pi^L$.  Let $v^L(\pi^L, \pi^F; \mathscr{I}^L(t))$ be the vector of criterion values for the leader's multiple objectives, given $(\pi^L, \pi^F)$, and information state $\mathscr{I}^L(t)$.  Our process of determining candidate leader policies from which the leader can select the most preferred policy is intended to determine $(\pi^L, \pi^F)$ pairs so that there exists no pair $(\rho^L, \rho^F)$ such that $$v^L_i(\rho^L, \rho^F; \mathscr{I}^L(t)) \leq v^L_i(\pi^L, \pi^F; \mathscr{I}^L(t)), \forall i$$ for all $\mathscr{I}^L(t)$, where $\rho^L$ represents any leader policy and $\rho^F$ represents a finite-memory approximation of the perfect memory follower response policy to $\rho^L$.

We note that by Proposition 1 and results in (Puterman, 1994, Chapter 6), the follower policy in the first equilibrium condition is an optimal policy; hence, the follower has no incentive to deviate from this policy. With respect to the second equilibrium condition, we note that all of the follower's policies are finite memory approximations of the follower's optimal perfect memory response policy to the leader's policy. Further, the process of determining the leader policies does not guarantee that pairs ($\pi^L, \pi^F$) will be determined that satisfy the second equilibrium condition. Thus, there is no guarantee that the leader will not want to deviate from the set of resultant non-dominated leader policies. However, given a sufficient number of generations of the MOGA and a sufficiently large $\tau$ such that the finite memory follower policy is a good approximation to an optimal perfect memory follower policy, it is reasonable to be confident that the resultant leader will have little incentive to deviate from the set of non-dominated leader policies generated.  Figure \ref{flow} provides an outline of the process for generating these non-dominated leader policies. 
\begin{figure}[H]
\centering
\includegraphics[width=4in]{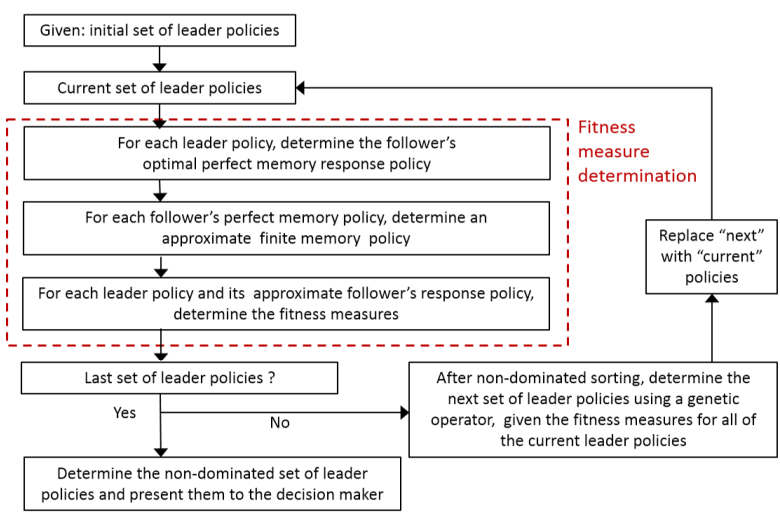}
\caption{Outline of the decision support process}
\label{flow}
\end{figure}
\subsection{Value of Information}
 We now address the question: will improved observation quality improve, or at least not degrade, the performance of agents?  With respect to the follower, assume $P(z(t+1) | s(t+1), s(t), a(t)) = P(z^F(t+1) | s^L(t+1), a(t)) P(z^L(t+1) | s^F(t+1), a(t))$, and let $P^F(a(t))$ be the stochastic matrix having $ij^{th}$ entry $P(z^F(t+1) | s^L(t+1), a(t))$, where $i = s^L(t+1)$ and $j = z^F(t+1)$. Let $Q^F(a(t))$ be a stochastic matrix such that for all $a(t)$, there exists a third stochastic matrix $R^F(a(t))$, where $Q^F(a(t)) = R^F(a(t)) P^F(a(t))$.  Then by results in (White and Harrington, 1980), $v^P(\mathscr{I}^F(t)) \leq v^Q(\mathscr{I}^F(t))$, where $v^P(\mathscr{I}^F(t))$ is the value of the follower's criterion associated with observation matrix $P^F(a(t))$  and $v^Q(\mathscr{I}^F(t))$ is the value of the follower's criterion associated with observation matrix $Q^F(a(t))$.  Thus, for the follower, the observation quality provided by $P^F(a(t))$ is at least as good as the observation quality provided by $Q^F(a(t))$ and the added value of using $P^F(a(t))$, relative to $Q^F(a(t))$, is the difference $0 \leq v^Q(\mathscr{I}^F(t)) – v^P(\mathscr{I}^F(t))$. 
  
The determination of conditions that guarantee that improved observation quality for the leader will not degrade leader performance is a topic for future research. We remark that counterexamples exist to the general claim that better information quality always implies improved system performance (Ortiz, Erera and White, 2012).
\section{Numerical Results}
This section presents an example that illustrates how the results in Section 3 can be used to support a defender (the leader) in selecting a sequence of actions to best control a simplified liquid egg production process over time, given that there is an attacker (the follower) who seeks to contaminate the liquid egg production process with a chemical or biological toxin. Both the defender and the attacker receive updated, possibly noise corrupted, data about his/her opponent just prior to each decision epoch.  The defender has two scalar criteria: (1) a measure of the system's vulnerability to an attacker, which the defender wants to minimize, and (2) a measure of the system's productivity, which the defender wants to maximize. The attacker's criterion is to maximize the expected number of packages produced by the facility that contain a sufficiently lethal dose of the toxin. 

The attacker's and defender's transition diagrams are presented in Figures \ref{attacker} and \ref{defender}, respectively. Two targets, $T_1$ and $T_2$ are considered. State $O$ is the state that the attacker is in prior to launching an attack, and includes attack team assembly, toxin manufacture, and transportation. States $PT_1$ and $PT_2$ are the pre-attack states in which the attacker is armed and ready to attack target 1 and target 2, respectively. At each decision epoch, the attacker can either choose to stay in the current state, advance forward to the next state, or retreat to a prior state. We remark that the transition probability of the attacker could be affected by the defender's strategy. An attacker's error or interdiction by the defender could return the attacker back to the state $O$.

The defender's states include a full production low alert state (FP), a low production high alert state (LP) and the attacked state (Att.). The defender in FP or LP could stay in his/her current state or switch to the other state with given probabilities. If an attack occurs, the failure of detecting an attack promptly and remain in either FP or LP will cause significant consequences for the defender. The defender can also terminate the game and shut down the facilities to clean the toxin if the attack is successfully detected. 

\begin{figure}[!t]
\centering
\includegraphics[width=2.5in]{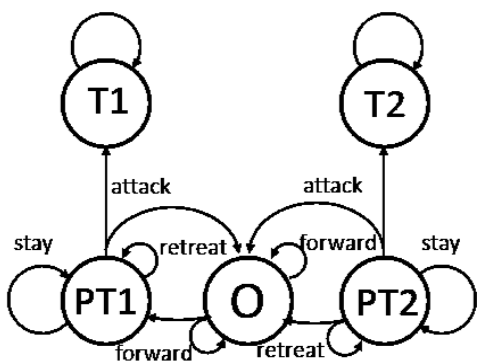}
\caption{Attacker's transition diagram}
\label{attacker}
\end{figure}
\begin{figure}[!t]
\centering
\includegraphics[width=2.5in]{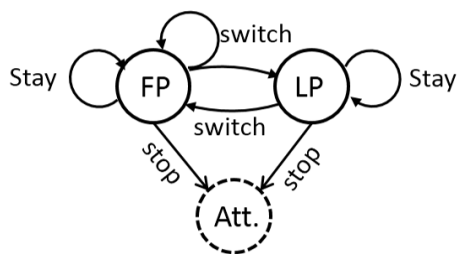}
\caption{Defender's transition diagram}
\label{defender}
\end{figure}
Given an attack, there are two groups of outcomes: unsuccessful attacks and successful attacks. If the attack is successful, the amount of toxin delivered to the consumers is described by a cumulative distribution function. We assume that the distribution function satisfies a standard, first-order stochastic dominance assumption. 

We restrict our attention to deterministic defender policies since they are easy to implement. Related discussion can be found in Paruchuri (2004).  There are 64 defender policies and the MOGA can probabilistically identify the Pareto efficient policies within 5 generations. The non-dominated set of policies are presented in Table \ref{decision_support}.  The defender must then trade off productivity and vulnerability, based on his/her preferences in order to select a most preferred policy.  
\begin{table}[H]
\caption{Decision Support Table} 
\centering 
\begin{tabular}{|c| c| c|} 
\hline 
Policy  & Productivity  & Vulnerability \\ 
& ratio to maximum & ratio to minimum\\
\hline 
$\pi_1$ & 1.000& 7.077 \\ 
$\pi_2$ & 0.959 & 5.737 \\
$\pi_3$ & 0.942 & 5.633 \\
$\pi_4$ & 0.847 & 4.136 \\
$\pi_5$ & 0.691 & 1.000 \\ 
\hline 
\end{tabular}
\label{decision_support} 
\end{table}
Figure 4 compares the value of the attacker's reward, as a function of observation quality. The results are consistent with the discussion in Section 3.7; i.e. improved observation quality of the follower does not degrade the follower's performance. A comparison of the value of the defender's rewards as a function of observation quality is a topic of future research. 
\begin{figure}[H]
\centering
\includegraphics[width=4.5in]{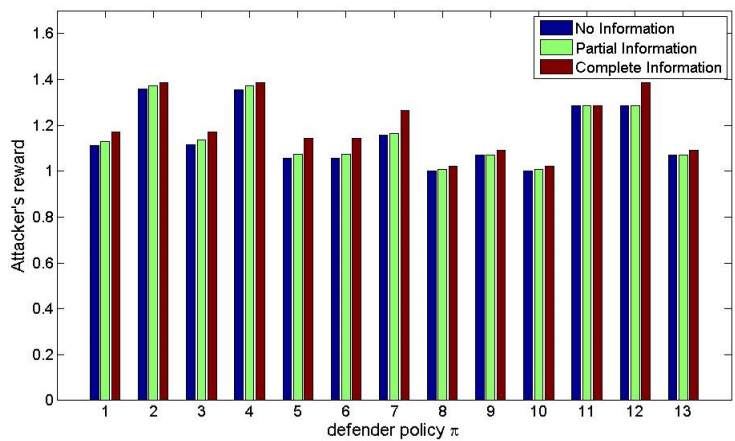}
\caption{Attacker's reward under different qualities of observations.}
\label{Att_Rwd}
\end{figure}
\section{Conclusions and Extensions}
The contributions of the paper are as follows:

(1) We have blended the POMG, the POMDP, and the MOGA to identify leader policies that are candidates for a most preferred policy in an infinite horizon, sequential decision making environment where:
\begin{itemize}
\item There are two intelligent and adaptable agents, a leader and a follower, and each can affect the performance of the other.
\item At each decision epoch, each agent knows: its past and present states, its past actions, and the noise corrupted observations of the other agent's past and present states.
\item The leader's and follower's actions are selected simultaneously at each decision epoch. 
\item Each agent's policy selects actions based on data currently available to the agent.
\item The follower knows the leader's policy and determines a response policy that is optimal with respect to the follower's objective.
\item The leader considers multiple objectives in selecting its policy.
\end{itemize}

(2) Given the POMG, a leader policy, and the assumption that the follower selects its policy with complete knowledge of and in response to the policy selected by the leader, we have constructed a specially structured POMDP that leads to the determination of a perfect-memory optimal policy for the follower (Proposition 1).  We have shown that this POMDP has a computationally useful sufficient statistic and a value function structure described in terms of this sufficient statistic.  By assuming that the leader policy is a finite-memory policy, we have shown that the sufficient statistic is finite-dimensional and that the value function has at least an approximate finite representation, thus insuring that at least a near-optimal perfect-memory policy for the follower is potentially computable.  

(3) We have determined a computationally tractable procedure for calculating the fitness measures for the MOGA, given that the policies for both agents are finite-memory policies (Proposition 2).  We have presented a simple procedure for finding a finite-memory approximation to a perfect-memory policy and used it to find a finite-memory policy for the follower, based on the perfect-memory policy determined through the use of Proposition 1.   The concomitant results show that there is a finite-dimensional sufficient statistic for the related fitness measures.

We remark that the computational tractability of procedures for determining the follower's response policy and the fitness measures for the MOGA is inextricably linked to the assumption that the agent policies are finite-memory policies.

The output of the process described in this paper can serve as the options generation phase of an option selection process; e.g., a deterministic version of multi-attribute decision theory (Kenney \& Raiffa, 1993).

Topics for future research include a sensitivity analysis in order to better understand the robustness of the results and a study of the value of improving the leader's quality of observations.  
\section*{Acknowledgments}
This material is based upon work supported by the U.S. Department of Homeland Security under Grant Award Number 2010-ST-061-FD0001 through a grant awarded by the National Center for Food Protection and Defense at the University of Minnesota. The views and conclusions contained in this document are those of the authors and should not be interpreted as necessarily representing the official policies, either expressed or implied, of the U.S. Department of Homeland Security or the National Center for Food Protection and Defense.


\begin{thebibliography}{9}
\singlespacing
\bibitem{Aberdeen}
Aberdeen, D., A (revised) survey of approximate methods for solving partially observable Markov decision processes, \emph{Technical report}, Research School of Information Science and Engineering, Australia National University, 2003.
\bibitem{Bakir}
Bakir, N. O., A Stackelberg game model for resource allocation in cargo container security, \emph{Annals of Operations Research}, \textbf{187}, 5 - 22, 2011.
\bibitem{Bakir2009}
Bakir, N. O. and Kardes, K., A Stochastic game model on overseas cargo container security, \emph{Non-published Research Reports}, CREATE center, Paper 6, 2009.
\bibitem{Becker}
Becker, R., Zilberstein, S., Lesser, V., and Goldman, C. V., Solving transition independent decentralized Markov decision processes, \emph{Journal of Artificial Intelligence Research (JAIR)}, \textbf{22}, 423 - 455, 2004.
\bibitem{Bernstein2002}
Bernstein, D. S., Givan, R., Immerman, N., and Zilberstein, S., The complexity of decentralized control of Markov decision processes, \emph{Mathematics of Operations Research}, \textbf{27}(4), 819 - 840, 2002.
 \bibitem{Bernstein2005}
Bernstein, D. S., Hansen, E. A., and Zilberstein, S., Bounded policy iteration for decentralized POMDPs, In \emph{Proceedings of the Nineteenth International Joint Conference on Artificial Intelligence (IJCAI)}, 1287 - 1292, Edinburgh, Scotland, July 2005.
 \bibitem{Bier}
Bier, V. M., Oliveros, S., and Samuelson, L. Choosing what to protect: Strategic defensive allocation against an unknown attacker,  \emph{Journal of Public Economic Theory}, \textbf{9}(4), 563 - 587, 2007. 
\bibitem{Bier}
Bier, V. M., Haphuriwat, N., Menoyo, J., Zimmerman, R., and Culpen, A. M., Optimal resource allocation for defense of targets based on differing measures of attractiveness, In \emph{Risk Analysis}, \textbf{28}(3), 763 - 770, 2008. 
\bibitem{Bopardikar}
Bopardikar, S. D., and Hespanha, J. P., Randomized solutions to partial information dynamic zero-sum games, In \emph{American Control Conference (ACC)}, San Francisco, CA, June 2011.
 \bibitem{Bowman}
Bowman, M., Briand, L. C., and Labiche, Y., Solving the class responsibility assignment problem in object-oriented analysis with multi-objective genetic algorithms, \emph{IEEE Transactions on Software Engineering (TSE)}, \textbf{36}(6), 817 - 837, 2010.
\bibitem{Cardoso}
Cardoso,J.M.P, Diniz, P.C , Game Theory Models of Intelligent Actors in Reliability Analysis: An Overview of the State of the Art, \emph{Game Theoretic Risk Analysis of Security Threats, International Series in Operations Research \& Management Science}, \textbf{128}, 1-19, 2009. 
\bibitem{Cassandra94a}
Cassandra, A. R., Kaelbling, L. P., and Littman, M. L., Acting optimally in partially observable stochastic domains , in \emph{Proceedings Twelfth National Conference on Artificial Intelligence (AAAI-94)}, Seattle, WA, 1023 - 1028, 1994a. 
\bibitem{Cassandra94b}
Cassandra, A. R., Optimal policies for partially observable Markov decision processes,  \emph{Technical Report (CS-94-14)}, Brown University, Department of Computer Science, Providence RI, 1994b.
\bibitem{Cassandra}
Cassandra, A. R., Littman, M. L. and Zhang, N. L., Incremental pruning: a simple, fast, exact method for partially observable Markov decision processes, in \emph{Proceedings Thirteenth Annual Conference on Uncertainty in Artificial Intelligence (UAI-97)}, Morgan Kaufmann, San Francisco, CA, 54 - 61, 1997. 
\bibitem{Cavusoglu}
Cavusoglu, H., and Kwark, Y., Passenger profiling and screening for aviation security in the presence of strategic attackers,  \emph{Decision Analysis}, \textbf{10}(1), 63 - 81, 2013. 
\bibitem{Cheng}
Cheng, H. T., Algorithms for partially observable Markov decision processes, PhD thesis, University of British Columbia, Vancouver, British Columbia, Canada, 1988. 
\bibitem{Coello}
Coello, C. A. C., An updated survey of GA-Based multiobjective optimization techniques,  \emph{ACM Computing Survey}, \textbf{32}(2), 109 - 143, 2000. 
\bibitem{Corne}
Corne, D. W., Knowles, J. D. and Oates, M. J., The Pareto envelope-based selection algorithm for Multiobjective optimization, In \emph{Proceedings of the Parallel Problem Solving from Nature VI Conference}, \textbf{1917}, 839 - 848, 2000.
\bibitem{Deb}
Deb, K., Nonlinear goal programming using multi-objective genetic algorithms, \emph{Journal of the Operational Research Society}, \textbf{52}(3), 291 - 302, 2001.
\bibitem{Deb}
Deb, K., Pratap, A., Agarwal, S., and Meyarivan, T., A fast and elitist multiobjective genetic algorithm: NSGA-II, \emph{IEEE Transactions on Evolutionary Computation}, \textbf{6}(2), 182 - 197, 2002.
\bibitem{Eagle}
Eagle, J. N.,  The optimal search for a moving target when the search path is constrained, \emph{Operations Research}, \textbf{32}(5), 1107 - 1115, 1984.
\bibitem{Emery}
Emery-Montemerlo, R., Gordon, G., Schneider, J., and Thrun, S., Approximate solutions for partially observable stochastic games with common payoffs, In \emph{Proceedings of the Third International Joint Conference on Autonomous Agents and Multi-Agent Systems (AAMAS)}, 136 - 143, 2004.
\bibitem{Feng}
Feng, Z., and Zilberstein, S., Region-based incremental pruning for POMDPs, In \emph{Proceedings of the Twentieth Conference on Uncertainty in Artificial Intelligence (UAI-04)} , Morgan Kaufmann, San Francisco, 146 - 153, 2004.
\bibitem{Filar}
Filar, J., and Vrieze, K., Competitive Markov decision processes, Springer, Heidelberg, 1997.
\bibitem{Forrest}
Forrest, S., Genetic algorithms: principles of natural selection applied to computation, \emph{Science}, \textbf{261}, 872 - 878, 1993.
\bibitem{Ghosh}
Ghosh, M. K., McDonald, D., and Sinha, S., Zero-sum stochastic games with partial information, \emph{Journal of optimization theory and applications}, \textbf{121}(1), 99 - 118, 2004.
\bibitem{Gmytrasiewicz}
Gmytrasiewicz, P. J., and Doshi, P., A framework for sequential planning in multi-agent settings, \emph{Journal of Artificial Intelligence Research}, \textbf{24}, 49 - 79, 2005.
\bibitem{Goldberg}
Goldberg, D. E., Genetic algorithms in search, optimization, and machine learning, Addison-Wesley: Reading, MA, 1989.
\bibitem{Hansen1998a}
Hansen, E. A., An improved policy iteration algorithm for partially observable MDPs, \emph{Advances in Neural Inform. Processing Systems}, \textbf{10} (NIPS-97), 1015 - 1021, MIT Press, Cambridge, MA, 1998a.
\bibitem{Hansen1998b}
Hansen, E. A., Solving POMDPs by searching in policy space, in \emph{Proceedings of Uncertainty in Artificial Intelligence}, \textbf{10}, 211 - 219, 1998b.
\bibitem{Hansen2004}
Hansen, E. A., Bernstein, D. S., and Zilberstein, S., Dynamic programming for partially observable stochastic games, in \emph{Proceedings of the nineteenth national conference on artificial intelligence}, 709 - 715, San Jose, California, 2004.
\bibitem{Hausken}
Hausken, K., and Zhuang, J., Governments' and terrorists' defense and attack in a T-period game, \emph{Decision Analysis}, \textbf{8}(1), 46 - 70, 2011.
\bibitem{Hauskrecht}
Hauskrecht M., Value-function approximations for partially observable Markov decision processes,  \emph{Journal of Artificial Intelligence Research}, \textbf{13}, 33 - 94, 2000.
\bibitem{Hauskrecht}
Hauskrecht M., Planning and control in stochastic domains with imperfect information,  PhD thesis, Massachusetts Institute of Technology, 1997.
\bibitem{Hespanha}
Hespanha, J. P., and Prandini, M., Nash equilibria in partial-information games on Markov chains, in \emph{IEEE Conference on Decision and Control}, Orlando, FL, 2102 - 2107, Dec 2001.
\bibitem{Hoey}
Hoey, J., Poupart, P., von Bertoldi, A., Craig, T., Boutilier, C., and Mihailidis, A., Automated handwashing assistance for persons with dementia using video and a partially observable Markov decision process, \emph{Computer Vision and Image Understanding}, \textbf{114} (5), 503 - 519, 2010.
\bibitem{Holland}
Holland, J. H., Adaptation in natural and artificial systems, University of Michigan Press: Ann Arbor, 1975. Reprinted in 1992 by MIT Press, Cambridge MA.
\bibitem{Holloway}
Holloway, H, and White, C. C., Question Selection and Resolvability for Imprecise Multi-Attribute Alternative Selection, \emph{IEEE Transactions on Systems, Man, and Cybernetics}, Part A, \textbf{38}(1),162-169, 2008.
\bibitem{Horn}
Horn, J., Nafpliotis, N., and Goldberg, D. E., A niched Pareto genetic algorithm for multiobjective optimization, In \emph{Proceedings of the 1st IEEE conference on evolutionary computation, IEEE World Congress on Computational Intelligence} \textbf{1}, 82 - 87, Orlando, FL, June, 1994.

\bibitem{Keeney}
Keeney, R. L. and Raiffa, H., Decisions with multiple objectives: preferences and value trade-offs, Cambridge University Press, 1993.
\bibitem{Koller}
Koller, D., Meggido, N., and von Stengel, B., Fast algorithms for finding randomized strategies in game trees, In \emph{26th Annual ACM Symposium on the Theory of Computing}, 750 - 759, 1994.
\bibitem{Konak}
Konak, A., Coit, D. W., and Smith, A. E., Multi-objective optimization using genetic algorithms: A tutorial,  \emph{Reliability Engineering and System Safety}, \textbf{91}, 992 - 1007, 2006.
\bibitem{Kumar}
Kumar, A., and Zilberstein, S., Dynamic programming approximations for partially observable stochastic games, In \emph{Proceedings of the twenty-second international FLAIRS conference}, 547 - 552, Sanibel Island, Florida.
\bibitem{Alex_1}
Lin, A. Z.-Z.,Bean, J., and White, C. C., Genetic algorithm heuristics for finite horizon partially observed Markov decision problems, \emph{Technical Report}, University of Michigan, Ann Arbor, 1998.
\bibitem{Alex}
Lin, A. Z.-Z.,Bean, J., and White, C. C., A hybrid genetic/optimization algorithm for finite horizon partially observed Markov decision processes, \emph{Journal on Computing}, \textbf{16}(1), 27 - 38, 2004.
\bibitem{Lin}
Lin, C. M. and Gen, M., Multi-criteria human resource allocation for solving multistage combinatorial optimization problems using multiobjective hybrid genetic algorithm, \emph{Expert Systems with Applications}, \textbf{34}(4), 2480 - 2490, 2008.
\bibitem{Littman1994a}
Littman, M. L., Memoryless policies: theoretical limitations and practical results, In \emph{Proceedings of the Third International Conference on Simulation of Adaptive Behavior: From Animals to Animats}, 238 - 245, 1994a.
\bibitem{Littman1994b}
Littman, M. L., The Witness algorithm: solving partially observable Markov decision processes, Brown University, Department of Computer Science, \emph{Technical Report} CS-94-40, 1994b.
\bibitem{Lovejoy1991a}
Lovejoy, W. S.,  A survey of algorithmic methods for partially observed Markov decision process, \emph{Annals of Operations Research}, \textbf{28} (1), 47 - 65, 1991a.
\bibitem{Manning}
Manning, L., Baines, R., and Chadd, S., Deliberate contamination of the food supply chain, \emph{British Food Journal}, \textbf{107} (4), 225 - 245, 2005.
\bibitem{McEneaney}
McEneaney, W. M.,  Some classes of imperfect information finite state-space stochastic games with finite-dimensional solutions, \emph{Applied Mathematics and Optimization}, \textbf{50}(2), 87 - 118, 2004.
\bibitem{McLay2012}
McLay, L.,Rothschild, C., Guikema, S., Robust adversarial risk analysis: A level-k approach \emph{Decision Analysis}, \textbf{9}(1), 41 - 54, 2012.
\bibitem{Monahan}
Monahan, G. E.,  A survey of partially observable Markov decision processes: Theory, models, and algorithms, \emph{Management Science}, \textbf{28}, 1 - 16, 1982.
\bibitem{Mohtadi}
Mohtadi, H. and Murshid, A. P.,  Risk analysis of chemical, biological, or radionuclear threats: Implications for food security, \emph{Risk Analysis}, \textbf{29}, 1317 - 1335, 2009.
\bibitem{Moghadasi}
Naser-Moghadasi, M., Evaluating effects of two alternative filters for the incremental pruning algorithm on quality of POMDP exact solutions, \emph{International Journal of Intelligence Science}, \textbf{2}, 1 - 8, 2012.
\bibitem{Oliehoek}
Oliehoek, F. A., Spaan, M. T. J., and Vlassis, N., Best-response play in partially observable card game, In \emph{Proceedings of the 14th Annual Machine Learning Conference of Belgium and the Netherlands}, 45 - 50, Feb. 2005.
\bibitem{Oliehoek}
Oliehoek, F. A., Spaan, M. T. J., and Vlassis, Nikos, Optimal and approximate Q-value functions for decentralized POMDPs, \emph{Journal of Artificial Intelligence Research}, \textbf{32}, 289 - 353, 2008.
\bibitem{Oliehoek}
Oliehoek, F. A.,Decentralized POMDPs, In: M. Wiering, \& M. V. Otterlo (Eds.), \emph{Reinforcement learning: State of the art}, \textbf{12}, 471 - 503, Springer, 2012.

\bibitem{Ombuki}
Ombuki, B., Ross, B. J., and Hanshar, F., Multi-objective genetic algorithms for vehicle routing problem with time windows, \emph{Applied Intelligence}, \textbf{24}, 17 - 30, 2006.
\bibitem{Ortiz}
Ortiz, O. L., Erera, A. L., White, C. C., State observation accuracy and finite-memory policy performance for partially observed Markov decision processes, \emph{technical report}, 2012.
\bibitem{Oryan}
O'Ryan, M., Djuretic, T., Wall, P., Nichols, G., Hennessy, T., Slutsker, L., Hedberg, C., MacDonald, K., and Osterholm, M., An outbreak of salmonella infection from ice cream, \emph{New England Journal of Medicine}, \textbf{335}(11), 824 - 825, 1996.
\bibitem{Paruchuri}
Paruchuri, P., Tambe, M., Ordonez, F., and Kraus, S., Towards a formalization of teamwork with resource constraints, In \emph{International Joint Conference on Autonomous Agents and Multiagent Systems}, 596 - 603, 2004.
\bibitem{Pineau}
Pineau, J., Gordon, G. J., and Thrun, S., Point-based value iteration: An anytime algorithm for POMDPs, In \emph{International Joint Conference on Artificial Intelligence}, 1025 - 1032, 2003.
\bibitem{Platzman1977}
Platzman, L. K.,  Finite memory estimation and control of finite probabilistic systems, PhD thesis, Massachusetts Institute of Technology, Cambridge, MA, 1977.
\bibitem{Platzman1980}
Platzman, L. K.,  Optimal infinite-horizon undiscounted control of finite probabilistic systems, \emph{SIAM J. Control Optim.}, \textbf{18}, 362 - 380, 1980.
\bibitem{Ponnambalam}
Ponnambalam, S. G., Ramkumar, V., and Jawahar, N.,  A multiobjective genetic algorithm for job shop scheduling, \emph{Production Planning \& Control: The Management of Operations}, \textbf{12}(8), 764 - 774, 2001.
\bibitem{Poupart2005}
Poupart, P., Exploiting structure to efficiently solve large scale partially observable Markov decision processes, PhD thesis, Department of Computer Science, University of Toronto, 2005.
\bibitem{Poupart2011}
Poupart, P., Kim, K. E., and Kim, D., Closing the gap: Improved bounds on optimal POMDP solutions, In \emph{International conference on planning and scheduling (ICAPS)}, 2011.
\bibitem{Puterman}
Puterman, M. L., Markov decision processes: discrete dynamic programming, New York: J Wiley \& Sons, 1994.
\bibitem{Rabinovich}
Rabinovich, Z., Goldman, C. V., and Rosenschein, J. S., The complexity of multiagent systems: the price of silence, In \emph{Proceedings of the Second International Joint Conference on Autonomous Agents and Multi-Agent Systems (AAMAS)}, 1102 - 1103, Melbourne, Australia, 2003.
\bibitem{Raghavan}
Raghavan, T. E. S., and Filar, J. A., Algorithms for stochastic games - a survey, \emph{Methods and Models of Operations Research}, \textbf{35}, 437 - 472, 1991.
\bibitem{Rothschild2012}
Rothschild, C., McLay, L., Guikema, S., Adversarial risk analysis with incomplete information:A level-k approach, \emph{Risk Analysis},  \textbf{32}(7), 1219 - 1231, 2012. 
\bibitem{Schaffer}
Schaffer, J. D., Multiple objective optimisation with vector evaluated genetic algorithm, In \emph{Proceedings of the 1st International Conference on Genetic Algorithms}, 93 - 100, Morgan Kaufmann Publishers, Inc., San Mateo, 1985.
\bibitem{Seuken}
Seuken, S., and Zilberstein, S., Improved memory-bounded dynamic programming for decentralized POMDPs, In \emph{Proceedings of the 23rd Conference on Uncertainty in Artificial Intelligence}, Vancouver, Canada, July, 2007.
\bibitem{Shani}
Shani, G., Pineau, J., and Kaplow, R., A survey of point-based POMDP solvers, \emph{AAMAS}, 1 - 51, June 2012.
\bibitem{Shapley}
Shapley, L. S., Stochastic games, \emph{Proceedings of the National Academy of Sciences of the U. S. A.}, \textbf{39}, 1095 - 1100, 1953. 
\bibitem{Smallwood}
Smallwood, R. D., and Sondik, E. J., The optimal control of partially observable Markov decision processes over a finite horizon, \emph{Operations Research}, \textbf{21}, 1071 - 1088, 1973.
\bibitem{Sobel}
Sobel, J., Khan, A., and Swerdlow, D., Threat of a biological terrorist attack on the US food supply: the CDC perspective, \emph{The Lancet}, \textbf{359}(9309), 874 - 880, 2002.
\bibitem{Sondik}
Sondik, E. J., The optimal control of partially observable Markov processes over the infinite horizon: discounted costs, \emph{Operations Research}, \textbf{26}, 282 - 304, 1978.
\bibitem{Srinivas}
Srinivas, M., and Patnaik, L. M., Genetic algorithms: A survey, \emph{IEEE Computer}, \textbf{27}(6), 17 - 26, 1994.
\bibitem{Thomson}
Thomson, B., and Young, S., Bayesian update of dialogue state: A POMDP framework for spoken dialogue systems, \emph{Computer Speech \& Language}, \textbf{24}(4), 562 - 588, 2010.
\bibitem{Ummels}
Ummels, M., Stochastic multiplayer games: theory and algorithms, PhD thesis, RWTH Aachen University, 2010.
\bibitem{Varakantham}
Varakantham, P., Maheswaran, R., Gupta, T., and Tambe, M., Towards efficient computation of quality bounded solutions in POMDPs: Expected Value Approximation and Dynamic Disjunctive Beliefs. In \emph{Twentieth International Joint Conference on Artificial Intelligence}, 2007.
\bibitem{Wang}
Wang, C. and Bier, V. M., Target-hardening decisions based on uncertain multiattribute terrorist utility, \emph{Decision Analysis}, \textbf{8}(4), 286 - 302, 2011.
\bibitem{White1980}
White, C. C., and Harrington, D. P., Application of Jansen's inequality to adaptive suboptimal design, \emph{Journal of Optimization Theory and Application}, \textbf{32}, 89 - 99, 1980.
\bibitem{White1991}
White, C. C.,  A survey of solution techniques for the partially observed Markov decision process, \emph{Annals of Operations Research}, \textbf{32}, 215 - 230, 1991.
\bibitem{White1989}
White, C. C., and Scherer, W. T.,  Solution procedures for partially observed Markov decision processes, \emph{Operations Research}, \textbf{37}, 791 - 797, 1989.
\bibitem{White1994}
White, C. C., and Scherer, W. T.,  Finite-memory suboptimal design for partially observed Markov decision processes, \emph{Operations Research}, \textbf{42}, 439 - 455, 1994.
\bibitem{Yildirim}
Yildirim, M. B., and Mouzon, G., Single-machine sustainable production planning to minimize total energy consumption and total completion time using a multiple objective genetic algorithm, \emph{IEEE Transactions on Engineering Management}, \textbf{59}(4), 585-597, 2012.
\bibitem{Yu}
Yu, H., Approximation solution methods for partially observable Markov and semi-Markov decision processes, PhD thesis, Department of Electrical Engineering and Computer Science, Massachusetts Institute of Technology, Cambridge, MA, 2007.
\bibitem{Zhang}
Zhang, H.,  Partially observable Markov decision processes: a geometric technique and analysis, \emph{Operations Research}, \textbf{58}, 214 - 228, 2010.
\bibitem{Zhuang}
Zhuang J. and Bier, V. M., Balancing terrorism and natural disasters defensive strategy with endogenous attacker effort,  \emph{Operations Research}, \textbf{55}(5), 976 - 991, 2007.
\end{thebibliography}
\end{document}